\documentclass[a4paper]{amsart}

\usepackage[all]{xy}
\usepackage{a4wide}
\usepackage{verbatim}
\usepackage{amsmath}
\usepackage{amssymb}
\usepackage{amsthm}
\usepackage{latexsym}
\usepackage{enumerate}
\usepackage{xcolor}
\usepackage{prettyref}
\usepackage{graphicx}
\usepackage[pdfencoding=auto]{hyperref}
\usepackage{bm}
\usepackage[linewidth=1pt]{mdframed}
\usepackage{mathrsfs}
\usepackage[lite]{amsrefs}
\usepackage[linewidth=1pt]{mdframed}
\usepackage{tikz}
\usetikzlibrary{matrix,arrows}
\usepackage{tikz-cd}
\usepackage{float}

  \newcommand{\rationals}{{\mathbb{Q}}}
\newcommand{\Tits}{\mathcal{T}}
\newcommand{\St}{\operatorname{St}}

\newtheorem{theorem}{Theorem}[section]
\newrefformat{thm}{\hyperref[{#1}]{Theorem~\ref*{#1}}}
\newtheorem{definition}[theorem]{Definition}
\newrefformat{def}{\hyperref[{#1}]{Definition~\ref*{#1}}}

\newrefformat{lem}{\hyperref[{#1}]{Lemma~\ref*{#1}}}

\newrefformat{prop}{\hyperref[{#1}]{Proposition~\ref*{#1}}}
\newtheorem{corollary}[theorem]{Corollary}
\newrefformat{cor}{\hyperref[{#1}]{Corollary~\ref*{#1}}}

\theoremstyle{definition}
\newtheorem*{remark}{Remark}
\newtheorem{recall}[theorem]{Recollection}

\newrefformat{rem}{\hyperref[{#1}]{Remark~\ref*{#1}}}

\newenvironment{proofof}[1]{\vspace{.2cm}\noindent\textsc{Proof of
    #1:}}{\hspace*{\fill} $\blacksquare$\par\vspace{.1cm}}

\newcommand*{\Homol}{\operatorname{H}}
\newcommand*{\SL}{\operatorname{SL}}
\newcommand*{\GL}{\operatorname{GL}}
\newcommand*{\PGL}{\operatorname{PGL}}
\newcommand*{\vcd}{\operatorname{vcd}}
\newcommand{\Z}{\mathbb{Z}}
\newcommand{\N}{\mathbb{N}}
\newcommand{\Q}{\mathbb{Q}}
\newcommand{\ringO}{\mathcal{O}}

\begin{document}

\title[On the Calegari-Venkatesh conjecture on algebraic K-Theory]{On the Calegari-Venkatesh conjecture \\connecting modular forms spaces and algebraic K-Theory}

\author{Alexander D. Rahm$^1$ and Emiliano Torti$^1$}

\date{\today}

\address{$^1$Laboratoire de math\'ematiques GAATI, Universit\'e de la Polyn\'esie Fran\c{c}aise, BP 6570, 98702 Faa'a, French Polynesia\\ Alexander.Rahm@upf.pf , ORCID: 0000-0002-5534-2716, https://gaati.org/rahm}

\subjclass[2020]{11F75: Cohomology of arithmetic groups;
11R70: $K$-theory of global fields
}
\keywords{}

\begin{abstract}
Calegari and Venkatesh constructed, modulo small torsion, a surjective homomorphism from the degree-two homology of $\mathrm{PGL}_2$ over a ring of algebraic integers $\mathcal{O}$ (with odd class number and sufficiently many embeddings) onto the second Milnor $K$-group of formal symbols over $\mathcal{O}$. Motivated by their question of whether this result reflects a deeper connection between the homology of $\mathrm{PGL}_2$ and algebraic $K$-theory, we show how to use elements from Quillen's higher $K$-groups to construct, modulo small torsion, cohomological classes of $\mathrm{GL}_2$ in multiple degrees. This procedure allows us, in the framework of Calegari and Venkatesh, to construct (under extra hypotheses) non-trivial torsion classes in $\Homol_2(\mathrm{PGL}_2 (\mathcal{O}), \mathbb{Z})$ arising from elements in $K_2 (\mathcal{O})$. Our methods rely on the interplay between algebraic $K$-theory and Steinberg homology via Quillen's $Q$-construction. We conclude with some numerical examples.
\end{abstract}

\maketitle
\setcounter{tocdepth}{1}
\tableofcontents

\section{Introduction}
\noindent
Let $\ringO_F$ be the ring of integers in a number field $F$, let $r_1$ be the number of its real embeddings and $r_2$ the number of its conjugate pairs of complex embeddings. For each place $v$ of $F$, denote by $F_v$ the completion at $v$. Let $\mathbb{A}_F$ (or simply $\mathbb{A}$) be the ring of ad\`eles of $F$ and denote by $\mathbb{A}_f$ the subring of finite ad\`eles. Consider $G=\text{PGL}_2 / F$ as an algebraic group over $F$ and let $\Sigma$ be a finite set of finite places of $F$. Let $G_\infty := G (F \otimes_\mathbb{Q} \mathbb{R})$ and $K_\infty \subset G_\infty$ be a maximal compact subgroup. Let $K_{ \Sigma}$ be the compact open subgroup of $G(\mathbb{A}_f)$ efined by 
$K_{0, \Sigma}=\Pi_{v\in\Sigma} K_{0, v} \Pi_{v\not\in\Sigma} G(\mathcal{O}_{F_v })$, where $K_{0, v}$ are the images in $\text{PGL}_2 (F_v )$ of the following groups (we keep the same notation for simplicity):
$$K_{0, v} =\Big\{g \in \text{GL}_2 (\mathcal{O}_{F_v})\; |\; g \equiv \begin{pmatrix} * & * \\ 0 & *\end{pmatrix}  \mod \pi_v \Big\}.$$
If $K$ is a compact open subgroup of $G(\mathbb{A}_f)$, we can define the associated arithmetic quotient:
$$Y(K):= G(F) \backslash G(\mathbb{A}) / K_\infty K \cong G(F) \backslash (G(\mathbb{A}_f )/K\times G_\infty /K_\infty).$$
Denote $Y_0 (1)$ and $Y_0 (\mathfrak{p})$ (for a chosen prime $\mathfrak{p}$ in $\mathcal{O}_F$) the locally symmetric spaces obtained as above, corresponding respectively to the choices $K=\text{PGL}_2 (\mathcal{O}_\mathbb{A})$ and $K=K_{\Sigma}$ with $\Sigma=\{\mathfrak{p}\}$.\\
The spaces $Y(K_\Sigma)$ are endowed with an action of certain operators, called Hecke operators, obtained as double coset operators built from each element $g\in G(\mathbb{A}_f )$; and such operators preserve the (co)homology of the spaces $Y(K_\Sigma)$ (for more details see ~\cite{CalegariVenkatesh}*{sec. 3.4.3}). Let $\mathbb{T}_\Sigma$ be the subring of $\text{End} (\Homol_q (Y(K_\Sigma), \mathbb{Z}))$ (for a fixed positive integer $q$) generated by those Hecke operators. Let $I$ be the ideal of $\mathbb{T}_\Sigma$ generated by the elements of the form $T-\text{deg}(T)$ for all Hecke operators $T$. We called the ideal $I$ the Eisenstein ideal. We refer the reader to ~\cite{CalegariVenkatesh}*{sec. 3.8} for a discussion on alternative definitions of the Eisenstein Ideal. \\
Calegari and Venkatesh, in their remarkable work ~\cite{CalegariVenkatesh}, established (among many other results) an important connection between certain 2-homology classes of $Y_{0} (1)$ (or in some case certain 1-homology classes of $Y_{0} (\mathfrak{p})$ for any prime $\mathfrak{p}$ of $F$) and the $K$-theory group $K_2 (\mathcal{O})$. 
Let $\omega_F^{(2)}$ be the order of the group of Galois invariant elements in the 2nd Tate twist $\mathbb{Q}/\mathbb{Z} (2)$ (see Notation and Terminology for more details).\\
In particular, Calegari and Venkatesh proved the following (see ~\cite{CalegariVenkatesh}*{theorem 4.5.1}):
\begin{theorem}[Calegari-Venkatesh]
Let $R=\mathbb{Z}\big[\frac{1}{\omega_F^{(2)}}\big]$. If $F$ has at least two Archimedean places, i.e. $r_1 + r_2 \geq 2$, then there exists a surjective homomorphism:
$$\Homol_2 (Y_0 (1) , R ) / I \cdot \Homol_2 (Y_0 (1) , R ) \twoheadrightarrow K^M_2 (O_F )\otimes R.$$
\noindent If $F$ is a quadratic imaginary field, $\mathfrak{p}$ is any prime and assume that $\Homol_1 (Y_0 (1), \mathbb{C} )=0$. 
Then there exists a subgroup $\mathcal{K}_{\mathfrak{p}}$ inside $\Homol_1 ( Y_0 (\mathfrak{p}) , R ) $, and a surjective homomorphism: 
$$\mathcal{K}_\mathfrak{p} / I \cdot \mathcal{K}_\mathfrak{p} \twoheadrightarrow K^M_2 (O_F )\otimes R.$$
\end{theorem}
\noindent Many interesting questions arise from the above result (see for example ~\cite{CalegariVenkatesh}*{rem. 4.5.3}). For exmple, Calegari and Venkatesh ask if or when the above surjections are actually isomorphisms. More generally, they ask if such result represents a deeper connection between the homology of the relevant spaces and algebraic $K$-theory, i.e. if for example the other algebraic $K$-groups contribute in other homology degrees. \\
This work started with the motivation to understand, in the first part of Calegari and Venkatesh's result, where the connection between algebraic $K$-theory and the (co)homology of the space $Y_0 (1)$ might come from. First, we place ourself in the slightly simplified situation when the class number of $F$ is odd. Indeed, when the class number of $F$ is odd, the space $Y_0 (1)$ is a connected $K(\pi, 1)$-space, and $\pi_1 (Y_0 (1))\cong \text{PGL}_2 (\mathcal{O}_F )$ (see ~\cite{CalegariVenkatesh}*{rem. 3.6.1}). As a consequence, we can identify the homology of such space with the group homology of its fundamental group. \\ One way to read Calegari and Venkatesh's result consists in realizing that the existence of a surjective morphism implies the existence of a set-theoretical section, i.e. a way to construct from each element in $K_2 (\mathcal{O}_F )$ (modulo small torsion) an element in $H_2 (Y_0 (1) , R )$ modulo the Eisenstein ideal. The goal of this article is to propose of a way to construct, starting from elements in Quillen's higher $K$-group $K_N (\mathcal{O}_F )$ (for each fixed positive integer $N$), classes in the (co)homology of $PGL_2 (\mathcal{O}_F )$. In our opinion, it might be interesting to understand to what extent such construction can be used to produce an actual section for the Calegari-Venkatesh map when $N=2$. As an application of our main result, we show that under certain hypothesis on the field $F$, our construction reaches exactly the homology groups of Calegari and Venkatesh. Finally, we will discuss some numerical examples. \\
Our construction relies on the fact that we can use the Hurewicz homomorphism to connect K-theory to the Steinberg homology groups via the homology of the classifying spaces from Quillen's $Q$-construction with respect to the category of free modules and the relative rank filtration. We now introduce briefly all the relevant terminology and notation. We denote the Hurewicz homomorphism by $\text{Hu}_N : K_N (\mathcal{O}) \rightarrow \Homol_{N+1} (BQ , \mathbb{Z})$ for a fixed positive integer $N$, where $BQ_j$, with $j \in \N$, is the sequence of the classifying spaces from Quillen's $Q$-construction of algebraic $K$-Theory and $BQ=BQ_\infty$. Next, we introduce the homology with coefficients in the Steinberg module, although we shall later see that up to small torsion, we can translate it into ordinary cohomology.
To do so, we associate to $\GL_n(F)$ a \emph{Tits building} $\Tits_n(F)$.  The space $\Tits_n(F)$ is the $(n-2)$-dimensional
simplicial complex whose $p$-simplices are flags of subspaces
\[0 \subsetneq V_0 \subsetneq \cdots \subsetneq V_p \subsetneq F^n.\]
The group $\GL_n(F)$ acts on $\Tits_n(F)$ by simplicial automorphisms.
\begin{definition}
The \emph{Steinberg module} $\St_n(F)$
is the $\GL_n (F)$-module given by the reduced top homology group $\tilde{\Homol}_{n-2}(\Tits_n(F);\Z)$.
\end{definition}
\noindent
For our main result, we need to be sure to avoid some small torsion issues that could make some steps in our construction fail. There are essentially two distinct types of small torsion issues that could appear. The first one appears already in the work of Calegari and Venkatesh, and we make the following (see ~\cite{CalegariVenkatesh}*{sec. 3.3.6}):
\begin{definition}
Fixing $j\in \N$ and a number ring $\ringO$, we call \emph{orbifold primes} those prime numbers which occur as factors in the order of elements of $\GL_j(\ringO)$. 
\end{definition}
\noindent
Note that the orbifold primes are all dividing $\omega_F^{(2)}$. 
The second type of small torsion that could create problems concerns the higher $K$-groups (so strictly speaking not the framework of Calegari and Venkatesh). Indeed, since our construction will pass through the Hurewicz homomorphism, we need to ensure that we stay away from its kernel. Soul\'e (see ~\cite{Soule}*{prop. 3}) proved that every element in such kernel is torsion and later on, Arlettaz (see ~\cite{Arlettaz}*{corollary 1.6}) proved that it has actually finite exponent, i.e. it is a finite group. This brings us to the following:
\begin{definition}\label{arlettaz}
For every $N\in \mathbb{Z}_{\geq 1}$, let $R_{N-1}\in \mathbb{Z}_{\geq 1}$ be Arlettaz's universal bound for the torsion of the kernel of the Hurewicz homomorphism, 
i.e. $R_{N-1} \cdot \text{Ker}(\text{Hu}_N )=0$. A torsion element $g\in K_N (\mathcal{O})$ is called large torsion if its order does not divide $R_{N-1}$. 
\end{definition}
\noindent
Note that the positive integers $R_{N-1}$ are defined by an explicit recurrence relation (see ~\cite{Arlettaz}*{def. 1.3}) and they satisfy the useful property that a prime $p$ divides $R_{N-1}$ if and only if $p \leq (N+1) / 2$. In particular, each element of $K_2 (\mathcal{O})$ is large torsion and for example, we have $R_1 =1$, $R_2 =2$, $R_3 =4$, $R_4 =24$, $R_5 =144$, and $R_6 =288$.\\
Going back to Quillen's $Q$-construction, denote by $\pi_j : \Homol_{N+1} (BQ_j , \mathbb{Z}) \rightarrow \Homol_{N+1} (BQ , \mathbb{Z})$ and by $\phi_j : \Homol_{N+1}(BQ_j , \mathbb{Z}) \to \Homol_{N+1-j} (\GL_{j}(\mathcal{O}), \St_{j}(F))$ the homomorphisms given by the spectral sequence of Quillen's $Q$-construction (see ~\cite{Quillen}*{theorem 3}). \\
We are now ready to state the first result of this article: 
\begin{theorem}\label{lifting}
Let $\ringO$ be the ring of integers in a number field $F$ of class number one, and let $N$ be a positive integer. 
There exists a map
$$\varphi : K_N (\mathcal{O})\rightarrow \bigoplus\limits_{j=0}^N \Homol_{N+1-j} (\GL_{j}(\mathcal{O}), \St_{j}(F)),$$
such that: \\
(1) $\varphi$ factors via the following commutative diagram: 
$$
\begin{tikzcd}
\Homol_{N+1} (BQ , \mathbb{Z}) \arrow{r}{\psi}  & \bigoplus\limits_{j=0}^N \Homol_{N+1} (BQ_j , \mathbb{Z}) \arrow{d}{\phi:=\oplus \phi_j} \\
K_N (\mathcal{O})  \arrow{u}{\text{Hu}_N} \arrow{r}{\varphi} &  \bigoplus\limits_{j=0}^N \Homol_{N+1-j} (\GL_{j}(\mathcal{O}), \St_{j}(F))
\end{tikzcd}
$$
\noindent where $\text{Hu}_N$ and $\phi$ are the group homomorphisms introduced above;\\
\break
(2) let $\alpha \in K_N (\mathcal{O})$ be a large torsion element or of infinite order, we have 
$$\varphi(\alpha) = (0, \dots, 0, a_j , 0, \dots, 0)\not=0 \text{ for some }j,\text{ and}$$
$$\pi (\psi (\text{Hu}_N (\alpha)))= \text{Hu}_N (\alpha), $$
i.e. $\psi$ is a section of the group homomorphism $\pi=\oplus_j \pi_j$ wrt the image of the Hurewicz homomorphism. 
\end{theorem}
\noindent
Recall that the groups $K_N (\mathcal{O})$ are finite if $N$ is even and for each $N$ they are finitely generated ~(see Quillen's paper \cite{Quillen} for the finite generation, and Borel's proposition 12.2 \cite{Borel74} for the vanishing rank). 
We provide the proof of this theorem in Section~\ref{Steinberg_connection}. 
As a terminology remark, sometimes we refer to the images of elements in $K_N (\mathcal{O})$ via $\varphi$ as lifts (see Notation and Terminology for more details).
\begin{remark}
 We cannot, with the tools at hand in the present paper, produce a homomorphism from $K_N (\mathcal{O})$ to $ \oplus_{j=0}^{N} \Homol_{N+1-j} (\GL_{j}(\mathcal{O}), \St_{j}(F)).$ Indeed, for example, we are unable to guarantee that the lifts of elements of finite orders in $K_N (\mathcal{O})$ are are always of finite order.
\end{remark}

\begin{recall}[Bieri--Eckmann duality]\label{Bieri-Eckmann duality}
Recall that a theorem of Borel and Serre~\cite{BorelSerre}*{theorem 11.4.4} implies a formula~\cite{ChurchFarbPutman}*{equation (1.1)} for the virtual cohomological dimension (vcd) of the general linear group $\GL_n(\ringO)$, namely
$$\vcd(\GL_n(\ringO)) = r_1\frac{(n+1)n}{2}+r_2n^2-n.$$
We shall use the notation $\nu(n) = \vcd(\GL_n(\ringO))$.\\
The long exact sequence interweaving Steinberg homology, group cohomology and Farrell--Tate cohomology of $\GL_{j}(\ringO)$ (see \cite{Brown}) reduces, because Farrell--Tate cohomology is trivial at non-orbifold primes, to isomorphisms
$$\Homol_m(\GL_{j}(\ringO);\thinspace \St_j(F)) \cong \Homol^{\nu(j)-m}(\GL_{j}(\ringO);\thinspace \Z) \text{ modulo orbifold primes},$$
and hence from the target $\bigoplus\limits_{j=0}^{N} \Homol_{N+1-j} (\GL_{j}(\mathcal{O}), \St_{j}(F))$ of Theorem~\ref{lifting} we reach, modulo orbifold primes, the cohomology groups with trivial $\Z$-coefficients $\bigoplus\limits_{j=0}^{N}
\Homol^{\nu(N+1-j)-j}(\GL_{N+1-j}(\ringO))
.$
\end{recall}

\subsection{Application to Calegari and Venkatesh's connection between modular forms spaces and algebraic K-Theory}
Assuming that the class number of $\ringO$ is odd, and that $r_1+r_2 \geq 2$, the theorem of Calegari and Venkatesh~\cite{CalegariVenkatesh}*{theorem 4.5.1} introduced before, tells us that modulo small (orbifold) torsion, by which we mean tensorizing with $R = \Z[\frac{1}{\omega_F^{(2)}}]$, there is a surjection
$$\Homol_2(\PGL_2(\ringO); \thinspace R) \twoheadrightarrow K_2(\ringO) \otimes R.$$
We recall that since $R$ is flat, $\Homol_2(\PGL_2(\ringO); \thinspace R) \cong \Homol_2(\PGL_2(\ringO); \thinspace \Z) \otimes R.$
Now, we state how under some extra hypothesis on $\mathcal{O}$,  our construction can be applied in the setting of Calegari and Venkatesh, i.e. when $N=2$. 
\begin{corollary} \label{lift-to-Steinberg1}
 Assume that $\ringO$ has class number $|{\rm cl}(\ringO)| = 1$, and has a real embedding or is Euclidean.
 Then all non-trivial elements of $K_2(\ringO)$ can be lifted non-trivially (in the sense of Theorem~\ref{lifting}) to elements in $\Homol_{1}(\GL_{2}(\ringO);\thinspace \St_{2}(F))$ or $\Homol_{2}(\GL_{1}(\ringO);\thinspace \St_{1}(F))$. \\
 Modulo orbifold torsion, the Bieri-Eckmann duality allows non-trivial lifts in $\Homol^{\nu(2)-1} (\GL_2 (\ringO) , R)$ or $\Homol^{\nu(1)-2} (\GL_1 (\ringO) , R)$.
\end{corollary}
\noindent
We defer the proof of this corollary to Section~\ref{CFP}. Now, we focus on the real quadratic case, where for example $\nu(1) = r_1+r_2 -1 = 1$, so the last mentioned module vanishes.
\begin{corollary}\label{GLtoPGL}
Let $F$ be a real quadratic field of class number one.
Then all non-trivial elements of $K_2(\ringO)\otimes R$ can be lifted non-trivially (in the sense of Theorem~\ref{lifting}) to elements into $\Homol^3(\GL_2(\ringO_F); \thinspace R)$ and further to $\Homol^t(\PGL_2(\ringO_F); \thinspace R)$ with $t=2$ or $t=3$.
\end{corollary}
\noindent
We defer also this proof to Section~\ref{CFP}.
Finally, we present the last application which is a direct consequence of the previous corollaries. We have the following:

\begin{corollary}\label{divisibility}
Let $p \geq 7$ be a prime, and let $F$ be a real quadratic field of class number one. Assume that $\Homol^3 (\text{GL}_2 (\mathcal{O}_F ), \mathbb{Q})=0$. If $p$ divides the order of $K_2 (\mathcal{O}_F )$, then $p$ divides the order of $\Homol_2 ( \text{PGL}_2 (\mathcal{O}_F ), \mathbb{Z})_{\text{tors}}.$
\end{corollary}
\noindent
Note that Prestel \cite{Prestel}*{\S 7} has shown that the orbifold primes of $\GL_2$ over a real quadratic ring $\ringO$ are in $\{2, 3, 5\}$, hence the restriction to primes $p\geq 7$. The above corollary provides a sufficient condition for the existence of torsion classes in $\Homol_2 (\mathcal{Y}_0 (1) , R)$, where $R=\mathbb{Z}[\frac{1}{30}]$. Note also that, as we will see later on, we know that $\Homol^4 (\text{GL}_2 (\mathcal{O}_F ), R )=0$, thanks to the vanishing theorem in ~\cite{ChurchFarbPutman}*{Theorem C}. However, we are not aware of existing conjectures, analogue to the case of $\text{GL}_2 (\mathbb{Z})$, which could predict the rank of $\Homol^i (\text{GL}_2 (\mathcal{O}_F ), \mathbb{Q})$ strictly under the bound $i=\text{vdc}(2)=4$. 

\subsubsection{Real quadratic ring examples}
Let us consider the case of $r_1 = 2$ real embeddings, $r_2=0$ pairs of complex embeddings, so $\ringO$ is the ring of algebraic integers in $\rationals(\sqrt{m})$ with $1 < m$ square-free,
and $\nu(2) = \vcd(\GL_2(\ringO)) = 4$.
With a method of de Jeu, described in Section~\ref{detecting-odd-torsion}, the authors have computed the order of $K_2(\ringO)$ for $1 < m < 2000$ square-free, in those cases where $\ringO$ is of class number $1$. Prestel \cite{Prestel}*{\S 7} has shown that the orbifold primes of $\GL_2$ over a real quadratic ring $\ringO$ are in $\{2, 3, 5\}$. So to have Bieri--Eckmann duality (isomorphisms in Recollection~\ref{Bieri-Eckmann duality}), we only list torsion at primes $p > 5$.
The outcomes are displayed in Tables~\ref{smaller100} and~\ref{larger100}.

\begin{table}[b]
\begin{mdframed}
\caption{Square-free integers $m$ with $1 < m < 100$ such that $\ringO_{\rationals(\sqrt{m})}$ is a principal ideal domain (PID), and such that there is a prime number $p > 5$ (hence a non-orbifold prime) in the order of $K_2(\ringO_{\rationals(\sqrt{m})})$. When a square-free integer $m$ is not listed, it can be either because $\ringO_{\rationals(\sqrt{m})}$ is not a PID, or because $K_2(\ringO_{\rationals(\sqrt{m})})$ contains no torsion of order $p > 5$.}
\label{smaller100}
$$
\footnotesize
\begin{array}{|l|c|c|c|c|c|c|c|c|c|c|c|c|c|c|c|c|c|c|c|c|c|}
 \hline
m & 11 & 19 & 22 & 38 & 43 & 46 & 47 & 53 & 57 & 59 & 61 & 62 & 67 & 71 & 73 & 83 & 86 & 89 & 94 & 97 \\ \hline
p & 7  & 19 & 23 & 41 &  7 & 37 &  7 & 7  & 7  & 17 & 11 & 7  & 41 & 29 & 11 & 43 & 31 & 13 & 53 & 17 \\ \hline
\end{array}
$$
We note that among the examples in this table, the rings of integers in $\rationals(\sqrt{11})$, $\rationals(\sqrt{19})$, $\rationals(\sqrt{57})$ and $\rationals(\sqrt{73})$ are the only norm-Euclidean ones, so they are covered by Lee and Szczarba's result~\cite{LeeSzczarba}*{corollary to theorem 4.1} that for $n\geq 2$
and $\ringO$ Euclidean with multiplicative norm, $\Homol_0(\GL_n(\ringO),\St_n(\ringO)) = 0.$
We observe that there is at most one prime number $p > 5$, in the order of $K_2(\ringO_{\rationals(\sqrt{m})})$ for $\ringO_{\rationals(\sqrt{m})}$ a principal ideal domain with $1 < m < 134$, and in such cases the multiplicity of $p$ is $1$.
This phenomenon ends at $m = 134$, where we observe prime factors $7$ and $43$. And the limit on the multiplicity ends at $m = 193$, where a factor $7^2$ can be observed.
\\
Note also that all prime numbers $7 \leq p \leq 43$ appear in this table, which indicates that there might be some kind of uniform distribution of non-orbifold primes over $\{|K_2(\ringO_{\rationals(\sqrt{m})})|\}_{m \text{ square-free}}.$
\end{mdframed}
\end{table}

\begin{table}
\begin{mdframed}
\caption{Square-free integers $m$ with $100 < m \leq 2000$ such that $\ringO_{\rationals(\sqrt{m})}$ is a principal ideal domain, and such that the order of $K_2(\ringO_{\rationals(\sqrt{m})})$ contains a prime number $p \geq m$.}
\label{larger100}
\footnotesize
$$
\begin{array}{|l|c|c|c|c|c|c|c|c|c|c|c|c|c|c|c|c|c|c|c|c|c|}
 \hline m & 107 & 118 & 163 & 166 & 214 & 251 & 283 & 302 & 331 & 334 & 419 & 422 & 478 & 487 & 563 & 571\\ \hline
 p       & 197 & 277 & 467 & 503 & 757 & 251 & 1021& 479 & 487 & 719 & 1619 & 1559 & 593 & 601 & 761 & 1097\\ \hline
\end{array}
$$

$$
\begin{array}{|l|c|c|c|c|c|c|c|c|c|c|c|c|c|c|c|c|c|c|c|c|c|}
 \hline m &  643 & 739 & 787 & 811 &  827 &  838 &  862  & 947  & 967  & 1019 & 1046 & 1094 & 1123 & 1163\\ \hline
 p      & 701 & 4831 & 1549 & 5297 & 4153 & 5101 & 1433 & 5231 & 1669 & 6211 & 2113 & 7039 & 1619 & 7001\\ \hline
\end{array}
$$

$$
\begin{array}{|l|c|c|c|c|c|c|c|c|c|c|c|c|c|c|c|c|c|c|c|c|c|}
 \hline m &  1259 & 1303 & 1307 & 1382 & 1427 & 1454 & 1531 & 1543  & 1607  & 1699 & 1718 & 1723 & 1774\\ \hline
 p      & 8377 & 2663 & 2711 & 8753 & 9293 & 1697 & 2801 & 3329 & 2903 & 5507 & 12097& 15667& 8737 \\ \hline
\end{array}
$$

$$
\begin{array}{|l|c|c|c|c|c|c|c|c|c|c|c|c|c|c|c|c|c|c|c|c|c|}
 \hline m  & 1814 & 1822 & 1867 & 1879 & 1931 & 1979 \\ \hline
 p        & 2111 & 2143 & 5689 & 4789 & 15443& 3253 \\ \hline
\end{array}
$$
\normalsize

Similarly to Table~\ref{smaller100}, in this table there is at most one prime number $p \geq m$ in the order of $K_2(\ringO_{\rationals(\sqrt{m})})$ for $\ringO_{\rationals(\sqrt{m})}$ a principal ideal domain with $100 < m < 2000$, and then the multiplicity is $1$.
\end{mdframed}
\end{table}
\subsubsection{Cubic rings}\label{Cubic rings}
Producing explicit examples over a cubic number ring $\ringO$ is difficult, because cubic rings make the computation of the order of $K_2(\ringO)$ much more complicated.
\begin{itemize}
 \item Let us just consider the case of the number field $F$ generated by a root of the polynomial
\mbox{$x^3 - x^2 + 2\cdot x + 12$}, of discriminant $-5^2\cdot 43=-1075$.
An experimental computation of Gangl yields $7 | K_2(\ringO_F)$.
As the class group of $\ringO_F$ is trivial, and there is a real embedding ($r_1 = r_2 = 1, \nu(2) = 5$), we can apply Corollary~\ref{lift-to-Steinberg1} to obtain a lift of the $7$-torsion from $K_2(\ringO_F)$ to $\Homol_{1}(\GL_{2}(\ringO_F);\thinspace \St_{2}(F))$.
Applying Bieri-Eckmann duality then transports our lift into $\Homol^{4}(\GL_{2}(\ringO_F);\thinspace \Z)$.

\item As another example, let us consider the totally real field $F'$ generated by a root of the polynomial
\mbox{$x^3 - 6\cdot x - 2$}, of discriminant $756$ and with $r_1 = 3, r_2 = 0, \nu(2) =7$.
Gangl experimentally finds $13$-torsion in $K_2(\ringO_{F'})$.
Also here we have trivial class group and real embeddings, so we apply Corollary~\ref{lift-to-Steinberg1} to lift of the $13$-torsion from $K_2(\ringO_{F'})$ to $\Homol_{1}(\GL_{2}(\ringO_{F'});\thinspace \St_{2}(F'))$.
Applying Bieri-Eckmann duality then transports our lift into $\Homol^{6}(\GL_{2}(\ringO_{F'});\thinspace \Z)$.
\end{itemize}

\subsection*{Application of the theorem at $N = 1$.}
Denoting by cl$(\ringO)$ the class group of $\ringO$, Putman and Studenmund~\cite{PutmanStudenmund}*{theorems A and B} have established that
$$\dim_\Q \Homol^{\nu(n)}(\GL_n(\ringO); \thinspace \Q) \begin{cases}
                                                         = 0, \medspace n \text{ even, } r_1+r_2 \geq n \text{ and } \ringO^\times \text{ contains an element of norm $-1$}; \\
                                                         \geq (|\text{cl}(\ringO)| -1)^{n-1}, \medspace n \text{ odd, or } \ringO^\times \text{ not containing  elements of norm $-1$}. \\
                                                        \end{cases}
 $$
Let $ \ringO^\times$ contain an element of norm $-1$, let $r_1+r_2 \geq 2$.
Then we get $\dim_\Q \Homol^{\nu(2)}(\GL_2(\ringO); \thinspace \Q) = 0$, and applying Theorem~\ref{lifting} to $N=1$, we conclude that for $p$ not an orbifold prime, all $p$-torsion elements of $K_1(\ringO)$
can be lifted to $\Homol^{\nu(1)-1}(\GL_1(\ringO); \Z) = \Homol^{r_1+r_2-2}(\ringO^\times; \Z)$ or to $p$-torsion elements in $\Homol^{\nu(2)}(\GL_2(\ringO); \Z)$.

\subsubsection*{Structure of the paper}
The proof of Theorem~\ref{lifting} is given in Section~\ref{Steinberg_connection}.
An extension to large torsion of Church, Farb and Putman's theorem on the vanishing of top-degree cohomology is described in Section~\ref{CFP}. We use it to prove Corollaries~\ref{lift-to-Steinberg1} and~\ref{GLtoPGL}.
A method for computing the order of $K_2$ of real quadratic rings is given in Section~\ref{detecting-odd-torsion}.

\subsubsection*{Acknowledgements} We are grateful to Philippe Elbaz-Vincent (University of Grenoble Alpes) for sharing his insights on the topic.  
We are indebted to Rob de Jeu (VU Amsterdam) for providing the theorem for computing the order of $K_2(\ringO)$ in Section~\ref{detecting-odd-torsion}.
We would like to heartily thank Herbert Gangl (Durham University) for computing the examples in Section~\ref{Cubic rings}.
And we are very grateful to Peter Patzt (University of Oklahoma) for a careful read of the manuscript and helpful suggestions for its improvement.
We would like to acknowledge financial support by the ANR grant MELODIA (ANR-20-CE40-0013).

\section{The connection via Steinberg homology}\label{Steinberg_connection}

In this section, we give the proof of the theorem stated in the Introduction.

\subsubsection*{Notation and Terminology}
Let $F$ be a number field, denote by $\text{Gal}(\overline{\mathbb{Q}} / F)$ the group of automorphisms of $\overline{\mathbb{Q}}$ which fixes $F$. We denote by $\omega_F$ the order of the group of roots of unity $\mu_F$ and by $\omega_F^{(2)}$ be the order of the group of Galois invariant elements in the 2nd Tate twist $\mathbb{Q}/\mathbb{Z} (2)$, i.e. $(\mathbb{Q}/\mathbb{Z} \otimes \mu^{\otimes 2}_\infty)^{\text{Gal}(\overline{\mathbb{Q}} / F )}$. For any commutative ring with unity $A$, denote by $K_2^M (A) $ Milnor's second $K$-group and denote by $K_i^Q (A)$ (or simply $K_i (A)$) Quillen's $K$-groups. It is well-known that the groups $K_2^M (A)$ and $K_2 (A)$ are isomorphic when $A$ is a field, or more generally when $A$ has "enough" units (see ~\cite{EVMS}*{sec. 1} and ~\cite{VDK}*{sec. 8.5 page 512}. Generally speaking, Theorem~\ref{lifting} gives a way to construct, starting from elements in the $N$-th $K$-group of a ring $\ringO$ of algebraic numbers, a non-zero element in the Steinberg homology groups, and later on in the applications in the (co)homology of $\text{PGL}_2 (\mathcal{O})$ (modulo small torsion). We refer sometimes to these constructed elements as lifts of the original element in the $K$ group. As it will be clear from the proof, our construction depends on chasing elements in a diagram relating Quillen's $Q$-construction of $K$-theory and Steinberg homology. The term lift comes from the fact that the set-theoretical map $\varphi$ is defined via a set-theoretical map $\psi$ which is a categorical lift in the category of sets because it is defined by taking pre-images.

For the purpose of the proof, we simplify the notation
$\Homol_m(\GL_j(\ringO); \thinspace \St_{j}(F))$,
over the ring of algebraic integers $\ringO$ in a number field $F$ and the associated Steinberg module $\St_{j}(F)$, to
$\widetilde{\Homol_m(\GL_j(\ringO))}$. We will keep this notation in the following section.

\begin{proofof}{Theorem \ref{lifting}}
First, we make use of the Hurewicz homomorphism
$$\text{Hu}_N : K_N(\ringO) := \pi_{N+1}(BQ) \to \Homol_{N+1}(BQ;\thinspace \Z),$$
where $BQ$ is the classifying space from Quillen's $Q$-construction of algebraic $K$-Theory.
Quillen's array of long exact sequences~\cite{Quillen} connects the $Q$-construction with the Steinberg homology $\widetilde{\Homol_*}$ of the general linear group -- for all $j \in \N$, there is an exact sequence
$$\hdots \to \Homol_m(BQ_{j}) \to \widetilde{\Homol_{m-j}(\GL_j(\ringO))} \to \Homol_{m-1}(BQ_{j-1}) \to \Homol_{m-1}(BQ_{j}) \to \widetilde{\Homol_{m-j-1}(\GL_j(\ringO))} \to \hdots,$$
where the Steinberg homology groups $\widetilde{\Homol_*}$ have coefficients in the Steinberg module of $\GL_j(\ringO)$, and the ordinary homology groups $\Homol_*$ have trivial $\Z$-coefficients.
\\
To link with the $K$-group $K_N(\ringO)$, we connect the first $(N+3)$ of these exact sequences to the following diagram, where the top row is the sequence for $j=1$, below it are the ones for $j=2$, and so on until finally the one for $j=N+3$.
\begin{center}
\hspace*{-9mm}
\begin{tikzpicture}[descr/.style={fill=white,inner sep=1.5pt}]
        \matrix (m) [
            matrix of math nodes,
            row sep=1em,
            column sep=2.5em,
            text height=1.99ex, text depth=0.75ex
        ]
        {
          &  & 0 = \Homol_{N+1}(BQ_0) & \Homol_{N+1}(BQ_1) & \widetilde{\Homol_{N}(\GL_1 (\ringO))}  & \hdots \\
        \hdots  & \widetilde{\Homol_N(\GL_2(\ringO))} & \Homol_{N+1}(BQ_1) & \Homol_{N+1}(BQ_2) & \widetilde{\Homol_{N-1}(\GL_2(\ringO))}  & \hdots \\
        \hdots  & \widetilde{\Homol_{N-1}(\GL_3(\ringO))} & \Homol_{N+1}(BQ_2) & \Homol_{N+1}(BQ_3) & \widetilde{\Homol_{N-2}(\GL_3(\ringO))}  & \hdots \\
  \hdots  & \hdots  & \hdots  & \hdots & \hdots  & \hdots \\
        \hdots  & \widetilde{\Homol_2(\GL_N(\ringO))} & \Homol_{N+1}(BQ_{N-1}) & \Homol_{N+1}(BQ_N) & \widetilde{\Homol_{1}(\GL_N(\ringO))}  & \hdots \\
        \hdots  & \widetilde{\Homol_{1}(\GL_{N+1}(\ringO))} & \Homol_{N+1}(BQ_N) & \Homol_{N+1}(BQ_{N+1}) & \widetilde{\Homol_{0}(\GL_{N+1}(\ringO))}  & \hdots \\
        \hdots  & \widetilde{\Homol_0(\GL_{N+2}(\ringO))} & \Homol_{N+1}(BQ_{N+1}) & \Homol_{N+1}(BQ_{N+2}) & 0  & \hdots \\
        \hdots  & 0 & \Homol_{N+1}(BQ_{N+2}) & \Homol_{N+1}(BQ_{N+3})  & 0  & \hdots \\
        };

        \path[overlay,->, font=\scriptsize,>=latex]
        (m-1-3) edge (m-1-4)
        (m-1-4) edge (m-1-5)
        (m-1-5) edge (m-1-6)

        (m-2-1) edge (m-2-2)
        (m-2-2) edge (m-2-3)
        (m-2-3) edge (m-2-4)
        (m-2-4) edge (m-2-5)
        (m-2-5) edge (m-2-6)

        (m-3-1) edge (m-3-2)
        (m-3-2) edge (m-3-3)
        (m-3-3) edge (m-3-4)
        (m-3-4) edge (m-3-5)
        (m-3-5) edge (m-3-6)

        (m-4-1) edge (m-4-2)
        (m-4-2) edge (m-4-3)
        (m-4-3) edge (m-4-4)
        (m-4-4) edge (m-4-5)
        (m-4-5) edge (m-4-6)

        (m-5-1) edge (m-5-2)
        (m-5-2) edge (m-5-3)
        (m-5-3) edge (m-5-4)
        (m-5-4) edge (m-5-5)
        (m-5-5) edge (m-5-6)

        (m-6-1) edge (m-6-2)
        (m-6-2) edge (m-6-3)
        (m-6-3) edge (m-6-4)
        (m-6-4) edge (m-6-5)
        (m-6-5) edge (m-6-6)

        (m-7-1) edge (m-7-2)
        (m-7-2) edge (m-7-3)
        (m-7-3) edge (m-7-4)
        (m-7-4) edge (m-7-5)
        (m-7-5) edge (m-7-6)

        (m-8-1) edge (m-8-2)
        (m-8-2) edge (m-8-3)
        (m-8-3) edge (m-8-4)
        (m-8-4) edge (m-8-5)
        (m-8-5) edge (m-8-6)
        ;
\path[overlay,->, font=\normalsize,>=latex] (m-2-3) edge (m-1-4) node[midway, above, yshift=19.3ex] {$=$};
\path[overlay,->, font=\normalsize,>=latex] (m-3-3) edge (m-2-4) node [midway, above, yshift=12.7ex] {$=$};
\path[overlay,->, font=\normalsize,>=latex] (m-4-3) edge (m-3-4) node [midway, above, yshift=6.1ex] {$=$};
\path[overlay,->, font=\normalsize,>=latex] (m-5-3) edge (m-4-4) node[midway, above, yshift=-0.5ex] {$=$};
\path[overlay,->, font=\normalsize,>=latex] (m-6-3) edge (m-5-4) node [midway, above, yshift=-7.1ex] {$=$};
\path[overlay,->, font=\normalsize,>=latex] (m-7-3) edge (m-6-4) node [midway, above, yshift=-13.7ex] {$=$};
\path[overlay,->, font=\normalsize,>=latex] (m-8-3) edge (m-7-4) node [midway, above, yshift=-20.3ex] {$=$};
\end{tikzpicture}
\end{center}
\noindent
where we have exploited that $Q_0$ is equivalent to the trivial category~\cite{Quillen}, and that
$$\widetilde{\Homol_{-m}(\GL_{N+1}(\ringO))} = 0 \text{ for } m > 0.$$ That means, the last map in the second-last row,
\begin{center}$\Homol_{N+1}(BQ_{N+1}) \to \Homol_{N+1}(BQ_{N+2})$, is a surjection,
\end{center}
and we can lift our torsion classes up along it.
The last row shows the beginning of the chain of isomorphisms yielding Quillen's stability
$$\Homol_{N+1}(BQ_{N+2}) \cong \Homol_{N+1}(BQ_{N+3}) \cong \Homol_{N+1}(BQ_{N+4}) \cong \hdots \cong \Homol_{N+1}(BQ).$$
Now, let $\alpha \in K_N(\ringO)$ be a large torsion element (in the sense of Def. \ref{arlettaz}) or an element of infinite order. Via the above chain of isomorphisms, the image of our element $\alpha$ via the Hurewicz homomorphism arrives in $\Homol_{N+1}(BQ_{N+2})$, and as announced, we lift it up to $\Homol_{N+1}(BQ_{N+1})$.
Our lift of $\text{Hu}_N (\alpha)$, a non-zero class in $\Homol_{N+1}(BQ_{N+1})$, is now found again in the third-last row, and \begin{itemize}                                                                                                                        \item either it yields a non-zero class in  $\widetilde{\Homol_{0}(\GL_{N+1}(\ringO))}$,
\item or it is in the kernel of the map going there, and we can lift it again, this time against the homomorphism $\Homol_{N+1}(BQ_N) \to \Homol_{N+1}(BQ_{N+1}).$                                                                                                                         \end{itemize}
We continue this game of climbing up the ladder and lifting, until we have shown that $\alpha$ can be lifted into $\widetilde{\Homol_{0}(\GL_{N+1}(\ringO))}$, $\widetilde{\Homol_{1}(\GL_{N}(\ringO))}$, $\hdots$, $\widetilde{\Homol_{N-2}(\GL_{3}(\ringO))}$, $\widetilde{\Homol_{N-1}(\GL_{2}(\ringO))}$ or $\widetilde{\Homol_{N}(\GL_1(\ringO))}$.\\
It is now straightforward to deduce from the construction described above the definition and properties of the set-theoretical map $\varphi$. 
\end{proofof}

\section{Vanishing of top-degree cohomology}
\label{CFP}
\noindent
Lee and Szczarba~\cite{LeeSzczarba}*{corollary to theorem 4.1} have shown that for $n \geq 2$ and $\ringO$ Euclidean with multiplicative norm, $\widetilde{\Homol_{0}(\GL_{n}(\ringO))} =0$. Church, Farb and Putman have extended this to $\ringO$ of class number $1$, with a real embedding or Euclidean -- in this section, we extend their result to large torsion, and prove the corollaries from the Introduction.\\
The ``Vanishing Theorem'' of Church, Farb and Putman~\cite{ChurchFarbPutman}*{Theorem C} extends as follows to large torsion.
\begin{theorem}[Consequence of Church, Farb and Putman's work] \label{vanishing-theorem}
 Let $\ringO_F$ be the ring of integers in an algebraic number field $F$ with $|{\rm cl}(\ringO_F)| = 1$. Suppose that $F$ has a real embedding or that $\ringO_F$ is Euclidean. Let $n \geq 2$, and let $P$ be the product of all orbifold primes in $\GL_n(\ringO_F)$. Then
$$\Homol^{\vcd(\SL_n(\ringO_F))}\left(\SL_n(\ringO_F); \thinspace \Z\left[\frac{1}{P}\right]\right) =
\Homol^{\nu(n)}\left(\GL_n(\ringO_F); \thinspace \Z\left[\frac{1}{P}\right]\right) = 0.$$
\end{theorem}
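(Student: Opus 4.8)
\noindent\emph{Proof proposal.} The plan is to rephrase the asserted cohomological vanishing as a statement about the coinvariants of the Steinberg module, to use Church, Farb and Putman's integrality theorem to see that these coinvariants form a \emph{cyclic} group, and then to write down one relation showing this cyclic group is annihilated by $2$; since $2$ is always an orbifold prime, that finishes the proof.

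First I would reduce via Bieri--Eckmann duality. Both $\GL_n(\ringO_F)$ and $\SL_n(\ringO_F)$ are virtual duality groups with dualizing module $\St_n(F)$, of dimensions $\nu(n)$, respectively $\vcd(\SL_n(\ringO_F))$. Every orbifold prime of $\SL_n(\ringO_F)$ is also one of $\GL_n(\ringO_F)$, so $\Z[\tfrac1P]$ carries no torsion at a prime dividing the order of a finite subgroup of either group; hence the Farrell--Tate cohomology of both groups with $\Z[\tfrac1P]$-coefficients vanishes, and Recall~\ref{Bieri-Eckmann duality} --- together with its obvious analogue for $\SL_n(\ringO_F)$ --- gives
$$\Homol^{\nu(n)}\bigl(\GL_n(\ringO_F);\Z[\tfrac1P]\bigr)\cong\Homol_0\bigl(\GL_n(\ringO_F);\St_n(F)\bigr)\otimes\Z[\tfrac1P],$$
$$\Homol^{\vcd(\SL_n(\ringO_F))}\bigl(\SL_n(\ringO_F);\Z[\tfrac1P]\bigr)\cong\Homol_0\bigl(\SL_n(\ringO_F);\St_n(F)\bigr)\otimes\Z[\tfrac1P].$$
Since $\Homol_0(\GL_n(\ringO_F);\St_n(F))$ is a quotient of $\Homol_0(\SL_n(\ringO_F);\St_n(F))$, it is enough to show the latter is annihilated by a product of orbifold primes.

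Next I would invoke Church, Farb and Putman: under the standing hypotheses, $\St_n(F)$ is spanned over $\Z$ by the integral apartment classes. Because $|\mathrm{cl}(\ringO_F)|=1$, every rank-one direct summand of $\ringO_F^{\,n}$ is free, so each integral apartment class has the form $[[v_1,\dots,v_n]]$ for an $\ringO_F$-basis $(v_1,\dots,v_n)$ of $\ringO_F^{\,n}$; since $\GL_n(\ringO_F)$ acts transitively on such bases, while a diagonal matrix preserves every coordinate subspace and therefore fixes the standard apartment pointwise --- in particular fixes its fundamental class $a:=[[e_1,\dots,e_n]]$ --- the factorization $g=g'\,\mathrm{diag}(\det g,1,\dots,1)$ with $g'\in\SL_n(\ringO_F)$ shows that the integral apartment classes form the single orbit $\SL_n(\ringO_F)\cdot a$. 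Hence $\St_n(F)=\Z[\SL_n(\ringO_F)]\cdot a$, so $\Homol_0(\SL_n(\ringO_F);\St_n(F))=\St_n(F)_{\SL_n(\ringO_F)}$ is cyclic, generated by the class of $a$. To kill this cyclic group, take $g\in\SL_n(\Z)\subseteq\SL_n(\ringO_F)$ with $ge_1=-e_2$, $ge_2=e_1$ and $ge_j=e_j$ for $j\geq 3$: its determinant is $1$, it stabilizes the standard apartment, and it acts on $a$ by the sign of the transposition $(1\,2)$, i.e.\ by $-1$ (a reflection of the $(n-2)$-sphere has degree $-1$). Therefore, in the coinvariants, the class of $a$ equals that of $g\cdot a=-a$, so $2$ annihilates $\Homol_0(\SL_n(\ringO_F);\St_n(F))$, which is consequently $\Z/2$ or $0$. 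Since $-I\in\GL_n(\ringO_F)$ has order $2$, the prime $2$ divides $P$; hence this group --- and with it its quotient $\Homol_0(\GL_n(\ringO_F);\St_n(F))$ --- vanishes after tensoring with $\Z[\tfrac1P]$, and by the first step the theorem follows.

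The step I expect to be the genuine obstacle is securing the \emph{integral} generation of $\St_n(F)$ by integral apartment classes in the full stated generality. In the Euclidean case it is Theorem~A of Church, Farb and Putman; in the remaining case ($F$ with a real embedding, class number one, $\ringO_F$ not Euclidean) their published conclusion, the Vanishing Theorem~C, is stated only rationally, so one must either check that their proof in fact yields that generation statement over $\Z$, or else revisit the proof to confirm that the only denominators needed when replacing $\Q$- by $\Z[\tfrac1P]$-coefficients are products of orbifold primes. Everything else above is formal.
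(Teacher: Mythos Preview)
Your reduction via Bieri--Eckmann duality to the coinvariants $\St_n(F)_{\SL_n(\ringO_F)}$ (and their quotient for $\GL_n$) is exactly the paper's first step. From there the paper is shorter: it simply observes that Church, Farb and Putman's \emph{proof} of their Theorem~C already establishes the \emph{integral} vanishing $\St_n(F)_{\SL_n(\ringO_F)}=0$ (Theorem~C is stated rationally only because Bieri--Eckmann duality over~$\Z$ fails at orbifold primes, not because the coinvariant computation itself needs denominators). So once one replaces~$\Q$ by~$\Z[1/P]$ in the duality step, nothing further is required.

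Your explicit argument---integral apartment classes form a single $\SL_n(\ringO_F)$-orbit, and the element $g$ swapping $e_1,e_2$ forces $2[a]=0$---is correct and gives an independent, pleasantly concrete reproof that the coinvariants are at most $\Z/2$, which is enough for the theorem. But the gap you flag in your final paragraph is genuine and is precisely why the paper does \emph{not} take your route: Church--Farb--Putman's Theorem~A (integral generation of $\St_n(F)$ by integral apartment classes) is proved unconditionally only for Euclidean $\ringO_F$; for non-Euclidean class-number-one rings with a real embedding they do not establish this generation statement. Their proof of Theorem~C in that case proceeds instead through the Bykovski\u{\i}-type presentation (their Theorem~E) and directly shows the coinvariants vanish integrally. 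So your suggested fix---``revisit the proof to confirm the only denominators are orbifold primes''---is exactly right, and when you carry it out you find there are no denominators at all in the coinvariant computation; that is what the paper cites.
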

\begin{proofof}{Theorem~\ref{vanishing-theorem}}
 The quoted Theorem C (``Vanishing Theorem'') also makes the analogous statement for twisted coefficient systems arising from rational representations of the algebraic group $\GL_n$ ; we are only interested in its statement for untwisted coefficients $\rationals$, which we extend to untwisted coefficients $\Z\left[\frac{1}{P}\right]$.
 \\
 Apart from inverting the prime $2$ for showing that coinvariants vanish, which we have done already because $2$ divides $P$, all the arguments of Church, Farb and Putman~\cite{ChurchFarbPutman}*{section 4, proof of Theorem C} pass over to our setting, we just have to notice that Bieri-Eckmann duality extends to $\Z\left[\frac{1}{P}\right]$ -- see Recollection~\ref{Bieri-Eckmann duality} above.
 In fact, they prove that the $\SL_n(\ringO_F)$-coinvariants of the Steinberg module vanish, and hence
 $$\widetilde{\Homol_{0}(\GL_n(\ringO_F))} = \widetilde{\Homol_{0}(\SL_n(\ringO_F))} = 0.$$
\end{proofof}

\begin{proofof}{Corollary~\ref{lift-to-Steinberg1}}
It is known that $K_2(\ringO)$ is a finite Abelian group~(see Quillen's paper \cite{Quillen} for the finite generation, and Borel's proposition 12.2 \cite{Borel74} for the vanishing rank).
Applying Theorem~\ref{lifting} in this setting to $N = 2$, and using the consequence of Theorem~\ref{vanishing-theorem} that $\widetilde{\Homol_0(\GL_3(\ringO))}= 0$, we obtain that all elements from $K_2(\ringO)$ (because $p > \frac{N+1}{2}$ for all prime numbers $p$) can be lifted to $\widetilde{\Homol_{1}(\GL_{2}(\ringO))}$ or $\widetilde{\Homol_{2}(\GL_{1}(\ringO))}$.
\end{proofof}

\begin{proofof}{Corollary~\ref{GLtoPGL}}
As $\ringO_F$ is real quadratic, $\nu(1) = r_1+r_2 -1 = 1$, so
$\widetilde{\Homol_{2}(\GL_{1}(\ringO_F))}
\cong \Homol^{\nu(1)-2} (GL_1 (\ringO_F) , R)$ vanishes.
To the lift from Corollary~\ref{lift-to-Steinberg1}, we apply the isomorphism modulo orbifold primes
$$\widetilde{\Homol_{1}(\GL_{2}(\ringO_F))} \cong \Homol^{\nu(2)-1}(\GL_2(\ringO_F); \thinspace \Z)$$
from Recollection~\ref{Bieri-Eckmann duality}.
We get $\nu(2) = 4$, so the classes lifted from elements not containing orbifold torsion arrive in $\Homol^{3}(\GL_2(\ringO_F);\thinspace R)$.
As we have assumed $\ringO_F$ to be a principal ideal domain, the sequence
\begin{equation}
 1 \to \ringO_F^\times \to \GL_2(\ringO_F) \to \PGL_2(\ringO_F) \to 1
 \label{group-extension}
\end{equation}
is exact. Moreover, it is a central extension, because the action of $\PGL_2(\ringO_F)$ on $\ringO_F^\times$  by conjugation is trivial. Therefore, in its Lyndon--Hochschild--Serre spectral sequence with trivial $R$-coefficients, the action of $\PGL_2(\ringO_F)$ on $\Homol^t(\ringO_F^\times; R)$ is trivial.
This will allow us to further lift our lifted elements from $\Homol^{\nu(2)-1}(\GL_2(\ringO_F); R)$ into $\Homol^*(\PGL_2(\ringO_F); \thinspace R)$.
Note that the primes appearing in the factorization of the order of the torsion part of $\mathcal{O}^\times$ are invertible in $R$. Then, as Dirichlet's unit theorem provides the group structure
$\ringO^\times \cong \Z^{r_1+r_2-1}\oplus \mu(F)$, we get
$\Homol^t(\ringO_F^\times; R) \cong R^{n(\ringO_F, t)}$ with $n(\ringO_F, t) = $\scriptsize$ \begin{pmatrix}
                                            r_1+r_2-1 \\ t
                                           \end{pmatrix}$ \normalsize
as the cohomology of the $(r_1+r_2-1)$-torus.
Now we use again our restrictive hypothesis that $F$ is real quadratic,
so  $n(\ringO_F, 1) = 1$, and for $t\geq 2$, $n(\ringO_F, t) = 0$, whence $$\Homol^t(\ringO_F^\times; R) \cong  \begin{cases}
                                            R, & t\in \{0,1\}
                                            \\ 0, & t \geq 2.
                                           \end{cases}$$ \normalsize
The above yields the following $E_2$-page of the Lyndon--Hochschild--Serre spectral sequence of the group extension (\ref{group-extension}), using the abbreviation $Q := \PGL_2(\ringO_F)$ :

\hspace*{-8mm}
\begin{tikzpicture}[descr/.style={fill=white,inner sep=1.5pt}]
\matrix (m) [matrix of math nodes,
    nodes in empty cells,nodes={minimum width=5ex,
    minimum height=5ex,outer sep=-5pt},
    column sep=1ex,row sep=1ex]{
    &&&&&&\\
          q\geq 2     &&  0   &  0   &  0   & 0 &\\
          q=1     &&  \Homol^0(Q; \thinspace \Homol^1(\ringO_F^\times))   &  \Homol^1(Q; \thinspace \Homol^1(\ringO_F^\times))    & \Homol^2(Q; \thinspace \Homol^1(\ringO_F^\times))   & \Homol^3(Q; \thinspace \Homol^1(\ringO_F^\times)) & \\
          q=0     &&  \Homol^0(Q; \thinspace \Homol^0(\ringO_F^\times))   &  \Homol^1(Q; \thinspace \Homol^0(\ringO_F^\times))    & \Homol^2(Q; \thinspace \Homol^0(\ringO_F^\times))   & \Homol^3(Q; \thinspace \Homol^0(\ringO_F^\times)) & \\
    \quad\strut && _{p=0}  &  _{p=1}  &  _{p=2}  & _{p=3} \strut & \\};
\draw[thick] (m-1-2.east) -- (m-5-2.east) ;
\draw[thick] (m-5-2.north) -- (m-5-7.north) ;
\end{tikzpicture}\\
\noindent
So the group $\Homol^{3}(\GL_2(\ringO_F);\thinspace R)$ is an extension of $\Homol^{2}(\PGL_2(\ringO_F);\thinspace R)$ by $\Homol^{3}(\PGL_2(\ringO_F);\thinspace R)$.\end{proofof}

\begin{proofof}{Corollary~\ref{divisibility}}
First, note that Prestel \cite{Prestel}*{\S 7} has shown that the orbifold primes of $\GL_2$ over a real quadratic ring $\ringO$ are in $\{2, 3, 5\}$, hence the restriction to primes $p\geq 7$. Let $g\in K_2 (\mathcal{O}_F )$ be a non-zero element of order $p$. Corollary 1.8 ensures us that starting from such $g\in K_2 (\mathcal{O}_F )$, we can lift it to a non-trivial element $g' \in \Homol_3 (BQ_2 )$, i.e. $\pi_2 (g' )= \text{Hu}_2 (g)$, such that $p$ divides $\text{ord} (g')$. Now, note that as observed in the proof of Corollary 1.8, we have $\Homol_{0} (\GL_{3}(\mathcal{O}), \St_{3}(F))=\Homol_{2} (\GL_{1}(\mathcal{O}), \St_{1}(F))=0$. \\
In particular, the group homomorphism $\phi_2 : H_3 (BQ_2 ) \rightarrow \Homol_{1} (\GL_{2}(\mathcal{O}), \St_{2}(F))$ is injective. Thanks to the Bieri-Eckman duality and under the hypothesis that $\Homol^3 (\text{GL}_2 (\mathcal{O}_F ), \mathbb{Q})=0$, we have that $g'' = \phi_2 (g')$ is a non-zero torsion element of order divisible by $p$. 
We keep denoting by $g''$ the corresponding non-zero element in $\Homol^3 (\text{GL}_2 (\mathcal{O}_F ), R)$, where $R=\mathbb{Z}[\frac{1}{30}]$. Since $\Homol^3 (\text{GL}_2 (\mathcal{O}_F ), R)$ is an extension of $\Homol^2 (\text{PGL}_2 (\mathcal{O}), R)$ by $\Homol^3 (\text{PGL}_2 (\mathcal{O}), R)$ (see the proof of the previous corollary), we define $g'''$ as the projection of $g''$ in $\Homol^2 (\text{PGL}_2 (\mathcal{O}), R)$. By the universal coefficient theorem, the torsion element $g'''$ has to come from a torsion element of $\text{Ext}^1_\mathbb{Z} (\Homol_1 (\text{PGL}_2 (\mathcal{O}), \mathbb{Z} ), R)$. However, this group is trivial since the primes dividing the order of the abelianization of $\text{PGL}_2 (\mathcal{O}_F )$ are 2 and 3 (see ~\cite{Mir}), and we are inverting those in $R$. We deduce from this that the element $g''$ has to come from a torsion element $h$ in $\Homol^3 (\text{PGL}_2 (\mathcal{O}), R)$ and the order of $h$ has to be divisible by $p$. Again by the universal coefficient theorem, we finally get a torsion element in $\Homol_2 (\text{PGL}_2 (\mathcal{O}), R)$ whose order is divisible by $p$ and this completes the proof. 
\end{proofof}

\section{\texorpdfstring{Computing the order of $K_2$ of real quadratic rings}{Computing the order of K2 of real quadratic rings}}
\label{detecting-odd-torsion}
\noindent
 In order to experimentally detect odd torsion in $K_2$ of real quadratic rings, we
use the observation at the beginning of~\cite{BelabasGangl} that~$ |K_2(\ringO)| $
can be computed from the value of its Dedekind~$ \zeta $-function
$ \zeta_F $ at $ s = -1 $ whenever the field of fractions~$ F $ of~$ \ringO $
is totally real and abelian. We give the argument for $ F $
real quadratic.

\begin{theorem}\label{zeta}
Let  $ F $ be real quadratic number field, $ F \neq \rationals(\sqrt2) $ and~$ F \neq \rationals(\sqrt5)$.
Then $|K_2(\ringO)| = 24\zeta_F(-1)$.
\end{theorem}
\begin{proofof}{Theorem~\ref{zeta}}
By~\cite{WeiKbook}*{Theorem~VI.8.8}
or~\cite{BurnsEtAl}*{Corollary~2.6} we have
$$\zeta_F(-1)= 2^2\frac{|K_2(\ringO)|}{|K_3(\ringO)|}.$$
If $ F $ is any
number field, then one has~$K_3(F) = K_3(\ringO)$ (see \cite{WeiKbook}*{Theorem~V.6.8}),
and this is known to be finite if (and only if)~$ F $ is totally real
by results of Quillen~\cite{Quillen} and Borel~\cite{Borel74}.
The Milnor $K$-group $K_3^M(F)$ injects into $K_3(F)$
for any field~$ F $
according to \cite{WeiKbook}*{Proposition~VI.4.3.2}.
For~$ F $ real quadratic the order of~$ K_3^M(F) $ is~$ 2^2 $ by
\cite{Ba-Ta} (or see \cite{WeiKbook}*{Examples~III.7.2(d)}), so if we let~$K_3^\text{ind}(F) = K_3(F) / K_3^M(F) $
then we have~$ |K_3(F)| = 2^2 |K_3^\text{ind}(F)| $, and
$ |K_2(\ringO)| = \zeta_F(-1) \cdot |K_3^\text{ind}(F)| $.\\
Because~$ K_3^\text{ind}(F) $ is finite for our~$ F $, we can compute its order using \cite{Levine}*{Cor.~4.6}.
More specifically, if $ p $ is a prime number then
the $ p $-primary part of $ K_3^\text{ind}(F) $ is isomorphic
to~$ H^0(F, \rationals_p / \Z_p(2)) $, where $ (2) $ denotes
the twist by the $ p $-cyclotomic character.
If the real quadratic field $F$ is not contained in $\rationals(\mu_{p^n})$ for
some~$n \in \N$, then Gal$(\overline{\rationals}/F)$ surjects onto Gal$(\rationals(\mu_{p^n})/\rationals)$,
and the $ p $-primary part is the same as for $\rationals$ instead
of~$F$, i.e., isomorphic to $ \Z/8\Z $ for $ p = 2 $, to $ \Z/3\Z $
for~$ p =3 $, and trivial otherwise.
If $ F \subseteq \rationals(\mu_{p^n}) $ for some $ n \ge 1  $,
and $ p \ge 5 $, then the image of Gal$(F/\rationals) $ consists
of the squares in Gal$(\rationals(\mu_{p^\infty}/\rationals)) \simeq \Z_p^* $.
In this case, if $ p > 5 $ then there is an~$ a $ in~$ \Z_p^* $ such that $ a^4-1 $
is also in $ \Z_p^* $, hence the $ p $-primary part of~$ K_3^\text{ind}(F) $
is trivial, but for $ p = 5 $ we have $ F = \rationals(\sqrt5) $
and the 5-primary part is isomorphic to~$ \Z/5\Z $.
The remaining case is for $ p = 2 $, and $ F = \rationals(\sqrt2) \subset \rationals(\mu_8) $.
Here~Gal$(F/\rationals) $ has image~$ \{\pm1\}\cdot(1+8\Z_2) $ in~Gal$(\rationals(\mu_{2^\infty}) / \rationals) \simeq \Z_2^* $,
so that the 2-primary part of~$ K_3^\text{ind}(\rationals(\sqrt2)) $ is
cyclic of order~16.\\
Therefore $ K_3^\text{ind}(F) $ is cyclic, of order~$ 24 $
unless~$ F = \rationals(\sqrt{2}) $, where the order is~48, or~$ F = \rationals(\sqrt{5}) $, where the order
is~120.
So~$ |K_2(\ringO)| = 24 \zeta_F(-1)$
unless~$ F =  \rationals(\sqrt{2}) $, where we
have to multiply this by~$ 2 $, or~$ F = \rationals(\sqrt{5}) $,
where we have to multiply it by~$ 5 $.
\end{proofof}
\noindent
Computing~$24\zeta_F(-1)$ in Pari/GP~\cite{PariGP}, and adjusting
this if~$ F = \rationals(\sqrt2) $ or~$ \rationals(\sqrt5) $, then yields~$|K_2(\ringO)|$.

\begin{bibdiv}
 \begin{biblist}

\bib{Arlettaz}{article}{
   author={Arlettaz, Dominique},
   title={The Hurewicz homomorphism in algebraic $K$-theory},
   journal={J. Pure Appl. Algebra},
   volume={71},
   date={1991},
   number={1},
   pages={1--12},
   issn={0022-4049},
   review={\MR{1107648}},
   doi={10.1016/0022-4049(91)90036-2},
}

\bib{Ba-Ta}{article}{
   author={Bass, H.},
   author={Tate, J.},
   title={The Milnor ring of a global field},
   conference={
      title={Algebraic $K$-theory, II: ``Classical'' algebraic $K$-theory
      and connections with arithmetic},
      address={Proc. Conf., Battelle Memorial Inst., Seattle, Wash.},
      date={1972},
   },
   book={
      series={Lecture Notes in Math.},
      volume={Vol. 342},
      publisher={Springer, Berlin-New York},
   },
   date={1973},
   pages={349--446},
   review={\MR{0442061}},
   doi={10.1007/BFb0073733},
}
\bib{BelabasGangl}{article}{
   author={Belabas, Karim},
   author={Gangl, Herbert},
   title={Generators and relations for $K_2\ringO_F$},
   journal={$K$-Theory},
   volume={31},
   date={2004},
   number={3},
   pages={195--231},
   issn={0920-3036},
   doi={10.1023/B:KTHE.0000028979.91416.00},
}
\bib{Borel74}{article}{
   author={Borel, Armand},
   title={Stable real cohomology of arithmetic groups},
   journal={Ann. Sci. \'Ecole Norm. Sup. (4)},
   volume={7},
   date={1974},
   pages={235--272 (1975)},
   issn={0012-9593},
   review={\MR{0387496}},
}
\bib{BorelSerre}{article}{
   author={Borel, A.},
   author={Serre, J.-P.},
   title={Corners and arithmetic groups},
   journal={Comment. Math. Helv.},
   volume={48},
   date={1973},
   pages={436--491},
   issn={0010-2571},
   review={\MR{0387495}},
   doi={10.1007/BF02566134},
}
\bib{Brown}{book}{
   author={Brown, Kenneth S.},
   title={Cohomology of groups},
   series={Graduate Texts in Mathematics},
   volume={87},
   note={Corrected reprint of the 1982 original},
   publisher={Springer-Verlag, New York},
   date={1994},
   pages={x+306},
   isbn={0-387-90688-6},
   review={\MR{1324339}},
}

\bib{BurnsEtAl}{article}{
   author={Burns, David},
   author={de Jeu, Rob},
   author={Gangl, Herbert},
   author={Rahm, Alexander D.},
   author={Yasaki, Dan},
   title={Hyperbolic tessellations and generators of $K_3$ for imaginary
   quadratic fields},
   journal={Forum Math. Sigma},
   volume={9},
   date={2021},
   pages={Paper No. e40, 47},
   review={\MR{4264211}},
   doi={10.1017/fms.2021.9},
}
\bib{CalegariVenkatesh}{article}{
   author={Calegari, Frank},
   author={Venkatesh, Akshay},
   title={A torsion Jacquet-Langlands correspondence},
   language={English, with English and French summaries},
   journal={Ast\'erisque},
   number={409},
   date={2019},
   pages={x+226},
   issn={0303-1179},
   isbn={978-2-85629-903-6},
   review={\MR{3961523}},
   doi={10.24033/ast},
}
\bib{ChurchFarbPutman}{article}{
   author={Church, Thomas},
   author={Farb, Benson},
   author={Putman, Andrew},
   title={Integrality in the Steinberg module and the top-dimensional
   cohomology of ${\rm SL}_n\ringO_F$},
   journal={Amer. J. Math.},
   volume={141},
   date={2019},
   number={5},
   pages={1375--1419},
   issn={0002-9327},
   review={\MR{4011804}},
   doi={10.1353/ajm.2019.0036},
}

 \bib{EVMS}{article}{
    author={Elbaz-Vincent, Philippe},
    author={M\"uller-Stach, Stefan},
    title={Milnor K-theory of rings, higher Chow groups
and applications},
    journal={Inventiones Mathematicae},
    volume={148},
    date={2002},
    pages={177--206},
    doi={10.1007/s002220100193},
 }

\bib{LeeSzczarba}{article}{
   author={Lee, Ronnie},
   author={Szczarba, R. H.},
   title={On the homology and cohomology of congruence subgroups},
   journal={Invent. Math.},
   volume={33},
   date={1976},
   number={1},
   pages={15--53},
   issn={0020-9910},
   review={\MR{0422498}},
   doi={10.1007/BF01425503},
}
\bib{Levine}{article}{
   author={Levine, Marc},
   title={The indecomposable $K_3$ of fields},
   journal={Ann. Sci. \'Ecole Norm. Sup. (4)},
   volume={22},
   date={1989},
   number={2},
   pages={255--344},
   issn={0012-9593},
   review={\MR{1005161}},
}

\bib{Mir}{book}{
      author={Behrooz Mirzaii},
      author={Bruno R. Ramos},
      author={Thiago Verissimo},
      title={Abelianization of $\text{SL}_2$ over Dedekind domains of arithmetic type}, 
      year={(2025)},
        address      = {available from \url{https://arxiv.org/abs/2506.12638}}
}
\bib{PariGP}{book}{
      author = {The PARI~Group},
      title        = {PARI/GP version \texttt{2.15.4}},
      year         = {(2022)},
      address      = {Univ. Bordeaux, available from \url{http://pari.math.u-bordeaux.fr/}}
}

\bib{Prestel}{article}{
   author={Prestel, Alexander},
   title={Die elliptischen Fixpunkte der Hilbertschen Modulgruppen},
   language={German},
   journal={Math. Ann.},
   volume={177},
   date={1968},
   pages={181--209},
   issn={0025-5831},
   review={\MR{0228439}},
   doi={10.1007/BF01350863},
}
\bib{PutmanStudenmund}{article}{
   author={Putman, Andrew},
   author={Studenmund, Daniel},
   title={The dualizing module and top-dimensional cohomology group of ${\rm GL}_n(\ringO)$},
   journal={Math. Z.},
   volume={300},
   date={2022},
   number={1},
   pages={1--31},
   issn={0025-5874},
   review={\MR{4359514}},
   doi={10.1007/s00209-021-02769-9},
}
\bib{Quillen}{article}{
   author={Quillen, Daniel},
   title={Finite generation of the groups $K\sb{i}$ of rings of algebraic
   integers},
   conference={
      title={Algebraic $K$-theory, I: Higher $K$-theories},
      address={Proc. Conf., Battelle Memorial Inst., Seattle, Wash.},
      date={1972},
   },
   book={
      series={Lecture Notes in Math.},
      volume={Vol. 341},
      publisher={Springer, Berlin-New York},
   },
   date={1973},
   pages={179--198},
   review={\MR{0349812}},
}

\bib{Soule}{article}{
   author={Soul\'e, Christophe},
   title={Op\'erations en $K$-th\'eorie alg\'ebrique},
   language={French},
   journal={Canad. J. Math.},
   volume={37},
   date={1985},
   number={3},
   pages={488--550},
   issn={0008-414X},
   review={\MR{0787114}},
   doi={10.4153/CJM-1985-029-x},
}

\bib{VDK}{article}{
   author={Van Der Kallen, Wilberd},
   title={The $K_2$ of rings with many units},
   journal={Annales scientifiques de l'ENS},
   volume={10},
   date={1977},
   number={4},
  pages={488--550},
   doi={10.24033/asens.1334},
}

\bib{WeiKbook}{book}{
   author={Weibel, Charles A.},
   title={The $K$-book},
   series={Graduate Studies in Mathematics},
   volume={145},
   note={An introduction to algebraic $K$-theory},
   publisher={American Mathematical Society, Providence, RI},
   date={2013},
   pages={xii+618},
   isbn={978-0-8218-9132-2},
   review={\MR{3076731}},
   doi={10.1090/gsm/145},
}

\end{biblist}
\end{bibdiv}

\end{document}